\newcommand{\bna}{\begin{eqnarray}}
\newcommand{\ena}{\end{eqnarray}}
\newcommand{\ba}{\begin{eqnarray*}}
\newcommand{\ea}{\end{eqnarray*}}
\newcommand{\bs}[1]{}
\newtheorem{theorem}{Theorem}
\newtheorem{corollary}{Corollary}
\def\p{{\bf p}}
\def\q{{\bf q}}
\def\v{{\bf v}}
\begin{document}

\title{Straight Line motion with rigid sets}

\author{Robert Connelly and 
Luis Montejano }
\maketitle

\begin{abstract}  If one is given a rigid triangle in the plane or space, we show that the only motion possible, where each vertex of the triangle moves along a straight line, is given by a hypocycloid line drawer in the plane, and a natural extension in three-space.  Each point lies on a circle which rolls around, without slipping,  inside a larger circle of twice its diameter.
\end{abstract}
\section{Introduction}
Consider three straight lines in the plane or in three-space.  When can you continuously move a point on each line such that all the pairwise distances between them stay constant?  One case is when the three lines are parallel. Figure \ref{fig:hypocycloid} shows another way.  Are these the only possibilities?  We answer the question affirmatively with Theorem \ref{thm:plane-main} in the plane, and Theorem \ref{thm:space-main} in higher dimensions.

\begin{figure}[here]
    \begin{center}
        \includegraphics[width=0.78\textwidth]{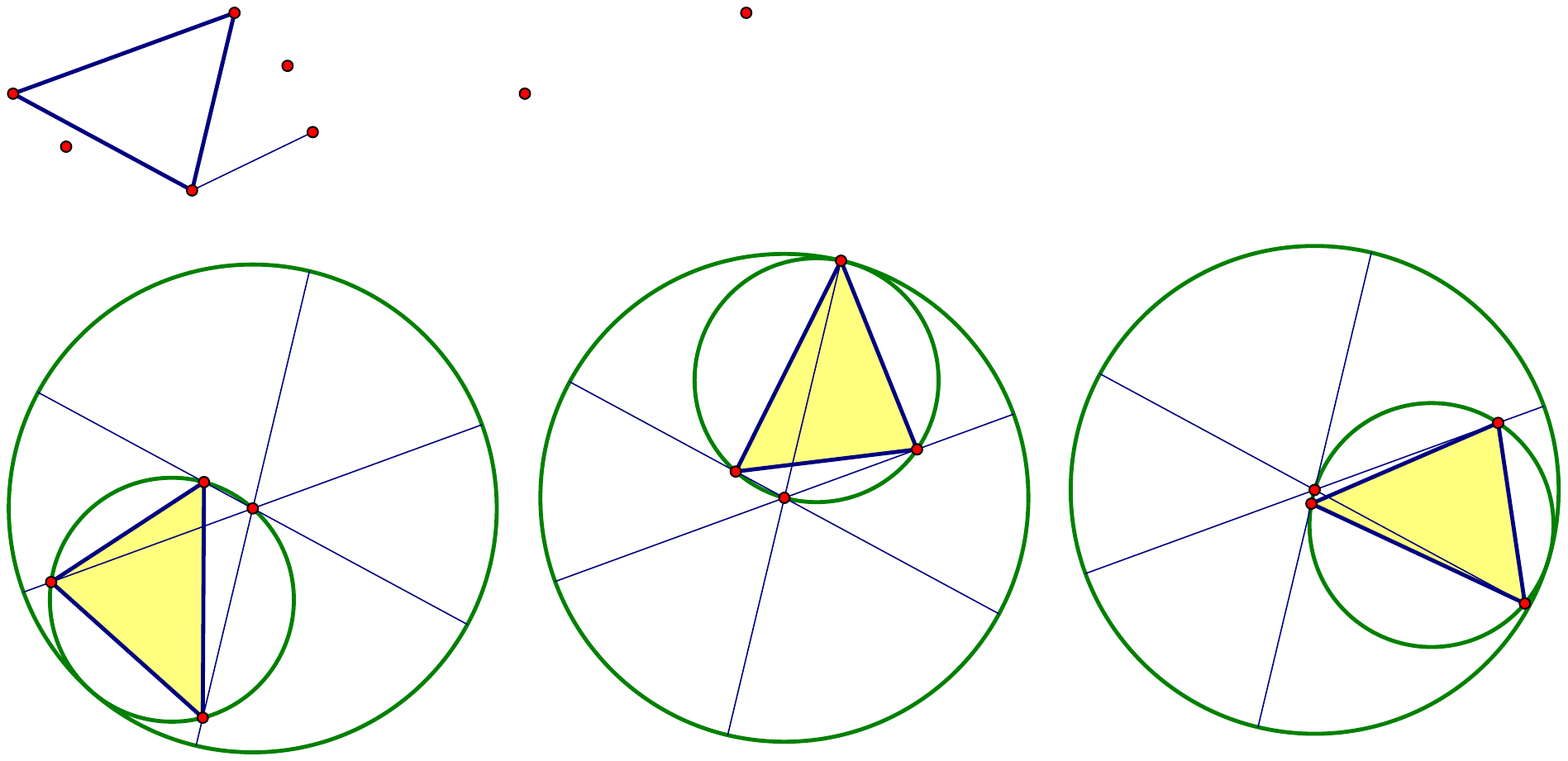}%
        \end{center}
    \caption{}
    \label{fig:hypocycloid}
    \end{figure}
    
The example of Figure  \ref{fig:hypocycloid} is known as a hypocycloid straight-line mechanism, and Figure \ref{fig:S16-2} shows a model from the Cornell Reuleaux collection. \cite{Reuleaux}.

\begin{figure}[here]
    \begin{center}
        \includegraphics[width=0.30\textwidth]{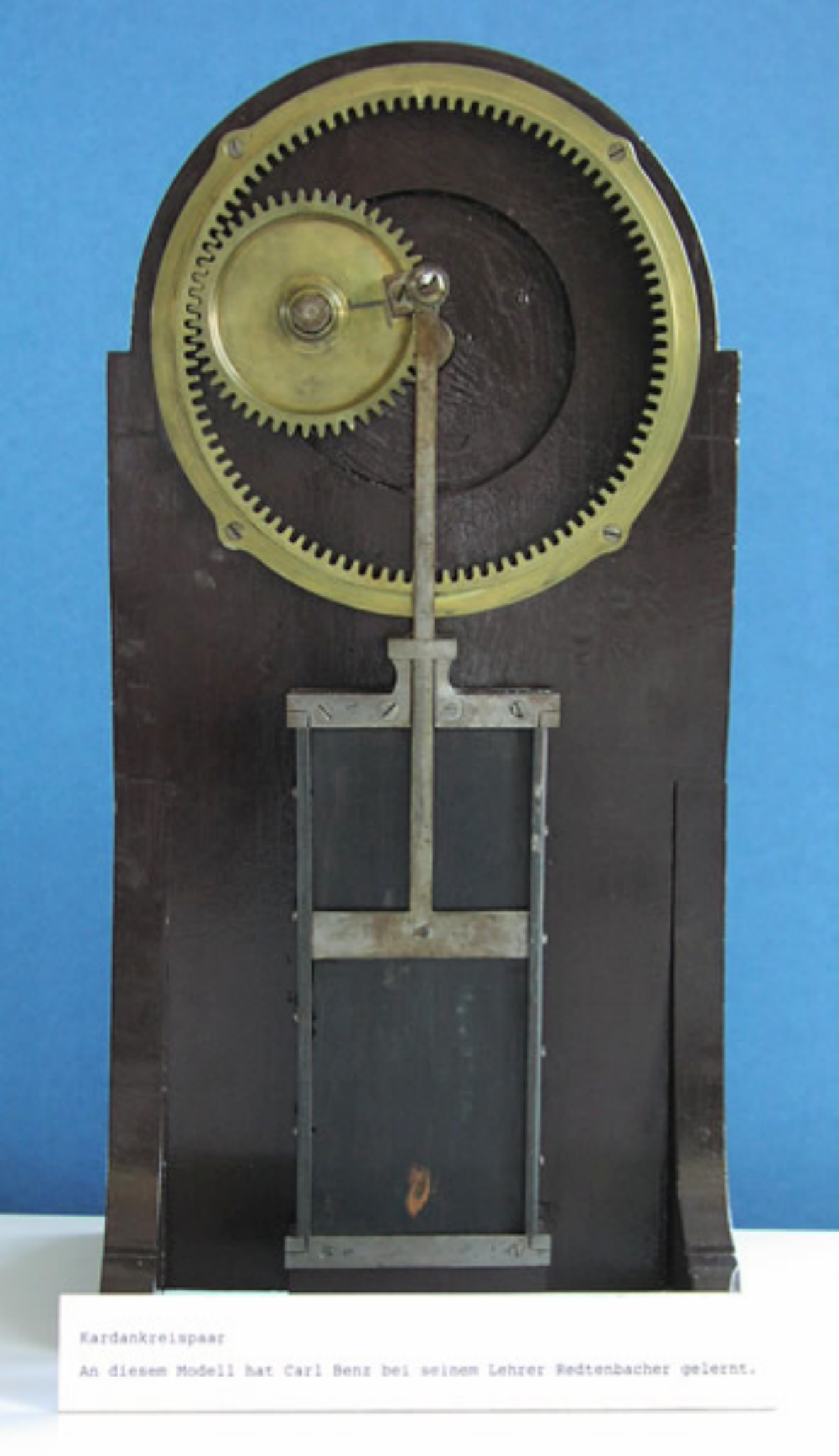}%
        \end{center}
    \caption{}
    \label{fig:S16-2}
    \end{figure}

It is easy to see from Figure \ref{fig:hypocycloid} that as the inner circle rolls  around the central point with a fixed radius, the triangle, formed from the other points of intersection with the three fixed lines form a triangle with fixed internal angles, and therefore the triangle moves rigidly.

In dimension three, one can form a cylinder rotating inside a larger cylinder of twice the diameter.  But in the plane and three-space, the three lines all have a fourth line that is perpendicular and incident to all three lines.  

\section{The planar case}

We first describe the motion of a segment of fixed length $d_{12}$ sliding between two fixed non-parallel lines, $L_1$ and $L_2$, first in the plane.  We assume, without loss of generality, that $L_1$ and $L_2$ intersect at the origin and are determined by two unit vectors $\v_1, \v_2$, where $\v_1 \cdot \v_2 =\cos \alpha_{12} =c_{12}$,  $\alpha_{12}$ being the angle between $\v_1$ and  $\v_2$, and $-1<c_{12} <1$.  Let $t_1, t_2$ be the oriented distance from the origin of the points $\p_1, \p_2$, so that $\p_1=t_1\v_1$, and $\p_2=t_2\v_2$.  Then treating the square of vector as the dot product with itself,
\begin{equation}\label{eqn:length}
d_{12}^2=(\p_1-\p_2)^2=(t_1\v_1-t_2\v_2)^2=t_1^2+t_2^2-2c_{12}t_1t_2.
\end{equation}
Thus in $t_1, t_2$ space the configurations of the line segment form an ellipse centered at the origin whose major and minor axes are at $45^{\circ}$ from the $t_1, t_2$ axes.  Thus there are constants $a_{12}=d_{12}/\sqrt{2(1-c_{12})},  b_{12}=d_{12}/\sqrt{2(1+c_{12})}$ such that 
\begin{equation}\label{eqn:trig}
t_1=a_{12}\cos \theta - b_{12}\sin \theta, \,\,\, t_2=a_{12}\cos \theta + b_{12}\sin \theta
\end{equation}
describes the full range of motion of the line segment for $0 \le \theta \le 2\pi$.  It is also clear from Figure \ref{fig:sliding-edge} that the length of the image of each $\p_i$ on $L_i$, $i=1,2$ is an interval of length $2d_{12}/\sin \alpha_{12}$ centered about the intersection of $L_1$ and $L_2$.   
\begin{figure}[here]
    \begin{center}
        \includegraphics[width=0.4\textwidth]{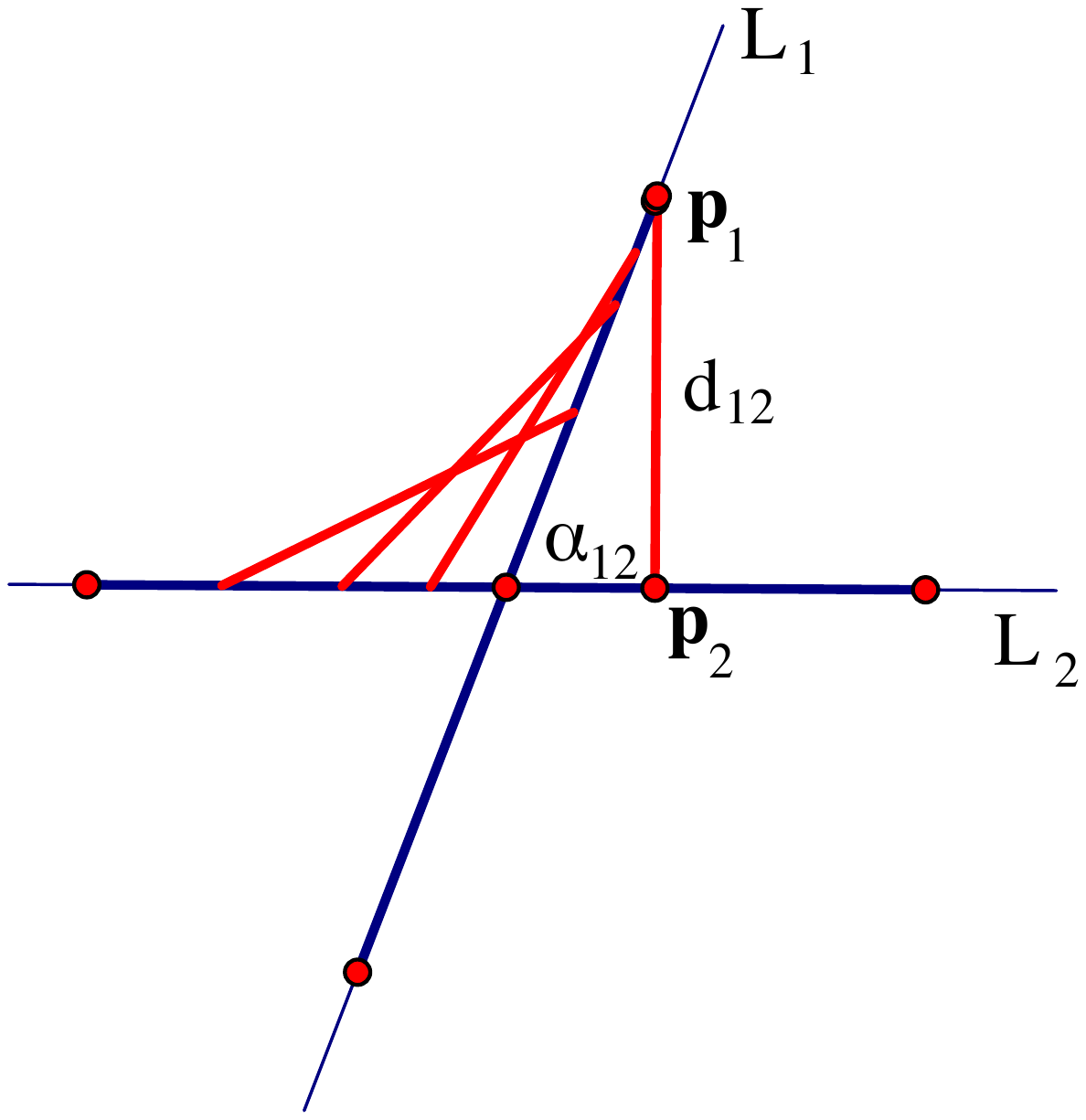}%
        \end{center}
    \caption{}
    \label{fig:sliding-edge}
    \end{figure}
We can now state the situation for the plane.

\begin{theorem}\label{thm:plane-main}If a triangle, with fixed edge lengths, continuously moves with each vertex $\p_i$ on a line $L_i$, $i=1,2,3$ in the plane, then either all the lines are parallel or they all intersect at a point forming a hypocycloid straight line drawer as above, with the range of each point an interval of the same length on each line, while the midpoint of each range is the intersection of the two lines.
\end{theorem}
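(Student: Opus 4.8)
The plan is to study the motion through its \emph{instantaneous center of rotation}. Assume the triangle is non-degenerate, so each $d_{ij}>0$, treat the motion as differentiable so that velocities are defined, and suppose it genuinely moves on some time interval. At each instant the configuration changes by an infinitesimal rigid motion of the plane, which is either an instantaneous translation or an instantaneous rotation about a unique center $Z$. I first dispose of the translational instants: an instantaneous translation gives every $\p_i$ a common velocity $\v$, and since $\p_i$ stays on $L_i$ this vector is parallel to each $L_i$; if the lines are not all parallel, it must be zero, i.e.\ such an instant is a moment of rest. Hence, outside the all-parallel case, at every instant the body is either momentarily at rest or rotating about a well-defined center $Z=Z(t)$ that varies continuously wherever it is defined.

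The key observation is that the velocity of the material point $\p_i$ is perpendicular to $Z-\p_i$ and, being tangent to $L_i$, is parallel to $\v_i$; therefore $Z-\p_i\perp L_i$, i.e.\ \emph{$\p_i$ is the orthogonal projection of $Z$ onto $L_i$}. Consequently, if two of the lines meet, say $L_i\cap L_j=\{O_{ij}\}$, then $\p_i$ and $\p_j$ each see the segment $O_{ij}Z$ at a right angle, so they lie on the circle with diameter $O_{ij}Z$, and the extended law of sines in that circle (the inscribed angle at $O_{ij}$ subtending $\p_i\p_j$ being the angle between $L_i$ and $L_j$) gives $d_{ij}=|\p_i-\p_j|=|O_{ij}Z|\,\sin\alpha_{ij}$. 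Thus $Z$ is pinned to the circle of radius $d_{ij}/\sin\alpha_{ij}$ centered at $O_{ij}$, for every intersecting pair.

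Next I rule out the non-concurrent configuration. If the three lines are not all parallel and not concurrent then — inspecting the two possibilities, no two parallel or exactly one parallel pair — at least two intersecting pairs have \emph{distinct} intersection points, so $Z$ lies in the intersection of two circles with distinct centers, a set of at most two points. On any subinterval where the body genuinely rotates, $Z(t)$ is continuous and takes values in this finite set, hence is constant; then each $\p_i=\operatorname{proj}_{L_i}(Z)$ is constant there, so all material velocities vanish, forcing $\p_1=\p_2=\p_3=Z$, impossible for a non-degenerate triangle. Hence the body never genuinely rotates; being then always at rest or purely translating, and the lines not all parallel, it is in fact always at rest, so the motion is trivial, contrary to assumption. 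Therefore the lines are all parallel or concurrent.

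It remains to identify the concurrent case. If the lines all pass through $O$, every $O_{ij}$ equals $O$, so the radii above must coincide: $\rho:=d_{12}/\sin\alpha_{12}=d_{13}/\sin\alpha_{13}=d_{23}/\sin\alpha_{23}$, and $Z$ moves on the circle of radius $\rho$ about $O$. Writing $Z=\rho\,(\cos\psi(t),\sin\psi(t))$ and $\v_i=(\cos\alpha_i,\sin\alpha_i)$, the projection formula gives $\p_i=\rho\cos(\psi(t)-\alpha_i)\,\v_i$, so the signed coordinate of $\p_i$ on $L_i$ is $t_i=\rho\cos(\psi-\alpha_i)$. A sum-to-product identity then identifies this with the orbit of the point at body angle $2\alpha_i$ on a circle of radius $\rho/2$ rolling without slipping inside the circle of radius $\rho$ about $O$ — the hypocycloid straight-line drawer of Figure~\ref{fig:hypocycloid}, with the three vertices lying on the rolling circle. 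Since $t_i\in[-\rho,\rho]$, the image of each $\p_i$ is an interval of length $2\rho$ on $L_i$ with midpoint $O$; comparing $\rho$ with the amplitude $d_{12}/\sin\alpha_{12}$ read off from (\ref{eqn:trig}) shows these intervals have the same length on every line, as asserted. I expect the main obstacle to be the kinematic bookkeeping in the third paragraph: making rigorous the passage from ``$Z$ is locally constant'' to ``the whole motion is trivial'' while correctly accounting for moments of rest and for the regularity of $Z(t)$ where the angular velocity is nonzero.
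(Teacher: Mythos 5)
Your argument is correct in substance but takes a genuinely different route from the paper. The paper works algebraically: it parametrizes the sliding of the edge $\p_1\p_2$ by the ellipse of (\ref{eqn:length})--(\ref{eqn:trig}), observes that the induced trace of $\p_3$ is a curve symmetric about $L_1\cap L_2$ (in fact an ellipse centered there), concludes that a line containing an arc of that trace must pass through the intersection, and then symmetrizes over the three pairs to get concurrency and equal ranges. You instead argue kinematically through the instantaneous center of rotation $Z$: each $\p_i$ must be the foot of the perpendicular from $Z$ to $L_i$, the right angles at $\p_i,\p_j$ put them on the circle with diameter $O_{ij}Z$, and the extended law of sines pins $Z$ to the circle of radius $d_{ij}/\sin\alpha_{ij}$ about each intersection point $O_{ij}$; non-concurrent, non-parallel lines produce two such circles with distinct centers, trapping $Z$ in a set of at most two points and freezing the motion. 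I checked the key identity: with $Z=\rho(\cos\psi,\sin\psi)$ and $\p_i=\rho\cos(\psi-\alpha_i)\v_i$ one gets $|\p_i-\p_j|=\rho\,|\sin(\alpha_i-\alpha_j)|$ exactly, and $a_{12}^2+b_{12}^2=d_{12}^2/\sin^2\alpha_{12}=\rho^2$, so your $t_i=\rho\cos(\psi-\alpha_i)$ agrees with (\ref{eqn:trig}). What your approach buys is that the fixed centrode (the circle of radius $\rho$ about $O$) and the moving centrode (the circle of diameter $\rho$ through the $\p_i$) appear explicitly, so the Cardan/hypocycloid description and the necessary compatibility $d_{12}/\sin\alpha_{12}=d_{13}/\sin\alpha_{13}=d_{23}/\sin\alpha_{23}$ are derived rather than asserted; the paper's route is shorter and needs only continuity.

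The one real caveat is regularity, which you flag but do not resolve: the theorem assumes only a \emph{continuous} motion, whereas your argument needs velocities, and the step ``$Z(t)$ is continuous on an interval of genuine rotation'' really uses $C^1$. This is repairable rather than fatal. The positions of the edge $\p_1\p_2$ form the real-algebraic curve (\ref{eqn:length}), so any non-constant continuous motion can be reparametrized by the angle $\theta$ of (\ref{eqn:trig}); the reparametrized motion is real-analytic and never at rest, the set where the angular velocity is nonzero is open, and your local-constancy argument for $Z$ then closes cleanly, eliminating the bookkeeping about moments of rest that you identify as the main obstacle. Since the conclusion concerns the lines and the image sets, not the parametrization, it transfers back to the original motion. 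A final small point: the claim that each range is the \emph{full} interval $[-\rho,\rho]$ requires extending the given motion around the whole configuration circle (as the paper does by noting the constraint, once satisfied on an interval of $\theta$, holds identically); as written you assert it from $t_i\in[-\rho,\rho]$, which only bounds the range.
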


\begin{proof}Choose any two of the the three lines, say $L_1$ and $L_2$.  The parametrization discussed above shows that any position of the $d_{12}$ segment can described by the equations in (\ref{eqn:trig}).  These define the positions of $\p_1$ and $\p_2$ as a function of $\theta$, for all $0 \le \theta \le 2\pi$.   Then the position of $\p_3$ is determined also as a function of $\theta$, $\p_3(\theta)$, since it is carried along rigidly.  The image of $\p_3$ is a continuous curve in the plane and it is symmetric about the intersection of $L_1$ and $L_2$. (Actually it is generally an ellipse, also, but that is not needed for this part of the argument.) If that image is to be in a straight line $L_3$, then $L_3$ should intersect the intersection of $L_1$ and $L_2$.  So if $\p_3(\theta)$ satisfies the equations corresponding to (\ref{eqn:length}) for $13$ and $23$ replacing $12$, for an interval of values of $\theta$, then it must satisfy those equations for all $\theta$ and the image of each $\p_i$ in $L_i$ is the same length for $i=1,2$.  But applying this argument to another pair such as $13$, shows that all the images are the same length. $\Box$
\end{proof}
\section{The higher dimensional case}

In higher dimensions, each pair of non-parallel lines $L_i, L_j, \,\,i, j = 1,2,3$ have two points $\q_{ij} \in L_i$, for $i \ne j$, such that $\q_{ij}-\q_{ji}$ is perpendicular to both $L_i$ and $L_j$.  The shortest distance between  $L_i$ and $L_j$ is $|\q_{ij}-\q_{ji}|=D_{ij}=D_{ji}$. We construct six variables $t_{ij},\,\, i,j =1,2,3,\,\, i\ne j$, where $\p_i = \q_{ij} + t_{ij}\v_i$. Then we can project orthogonally into a plane spanned by the vectors $\v_i, \v_j$ to get the analog  of Equation (\ref{eqn:length}), which is the following representing the three equations for each of the three edge lengths $d_{ij}=d_{ji}$ of the triangle:
\begin{equation}\label{eqn:high-d-length}
d_{ij}^2-D_{ij}^2=(\p_i-\p_j)^2=(t_{ij}\v_i-t_{ji}\v_j)^2=t_{ij}^2+t_{ji}^2-2c_{ij}t_{ij}t_{ji}.
\end{equation}
Similar to Equation (\ref{eqn:trig}), we define constants $a_{ij}=\sqrt{d_{ij}^2-D_{ij}^2}/\sqrt{2(1-c_{ij})}, \\ b_{ij}=\sqrt{d_{ij}^2-D_{ij}^2}/\sqrt{2(1+c_{ij})}$ such that $a_{ij}=a_{ji}, \, b_{ij}=b_{ji}$ and 
\begin{equation}\label{eqn:trig-2}
t_{ij}=a_{ij}\cos \theta_{ij} - b_{ij}\sin \theta_{ij}, \,\,\, t_{ji}=a_{ij}\cos \theta_{ij} + b_{ij}\sin \theta_{ij}
\end{equation}
and $\theta_{ij}=\theta_{ji}$ is the parameter as before. This gives three separate parameterizations for each line segment between each pair of lines.  Furthermore the equations of (\ref{eqn:high-d-length}) give a complete description of position of each $\p_i$, two ways for each line, in terms of $t_{ij}$ and $t_{ik}$, where $|t_{ij}-t_{ik}|=|\q_{ij}-\q_{ik}|$, a constant.  So $t_{ij}=t_{ik}  + e_{ijk}$, where $e_{ijk}= \pm |\q_{ij}-\q_{ik}|$ is a constant.  See Figure \ref{fig:coordinates} for a view of these coordinates.  

\newpage
\begin{figure}[here]
    \begin{center}
        \includegraphics[width=0.7\textwidth]{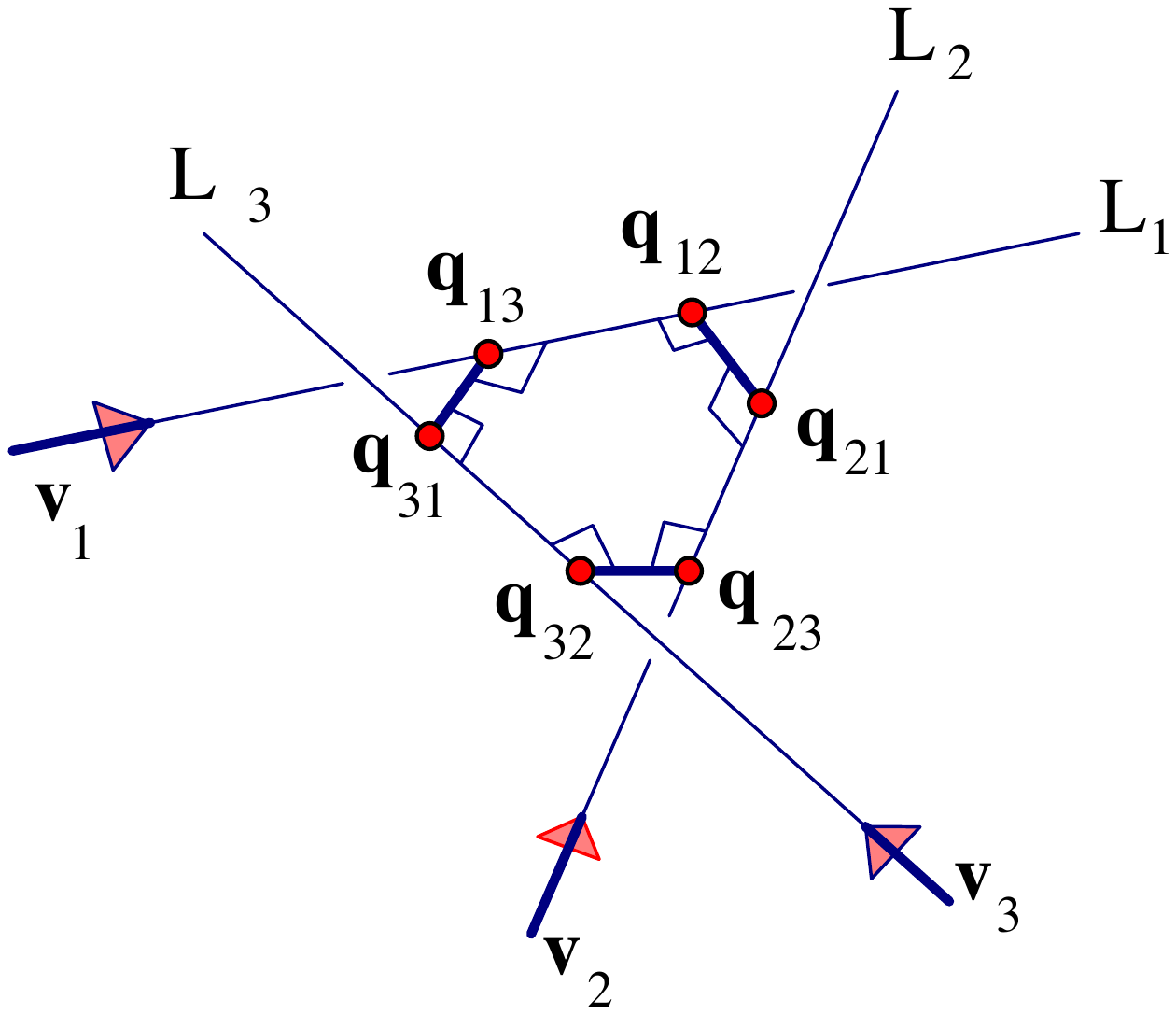}%
        \end{center}
    \caption{}
    \label{fig:coordinates}
    \end{figure}

\begin{theorem}\label{thm:space-main}If a triangle, with fixed edge lengths, continuously moves with each vertex $\p_i$ on a line $L_i$, $i=1,2,3$ in a Euclidean space, then either all the lines are parallel or they all intersect a line perpendicular all of them, with the range of each point an interval of the same length on each line, while the midpoint of each range is the intersection of the two lines.
\end{theorem}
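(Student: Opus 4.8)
\proof
The plan is to follow the architecture of the proof of Theorem~\ref{thm:plane-main}, reusing the coordinates $t_{ij}$ and the parametrizations~(\ref{eqn:trig-2}), and to isolate the one genuinely new point: in a Euclidean space of dimension at least three a rigid motion is not determined by its restriction to a single edge, so the soft ``reflection through $L_1\cap L_2$'' step of the planar argument must be replaced by an explicit computation. The target of the hard part is to produce a line $\ell$ perpendicular to, and meeting, all three of $L_1,L_2,L_3$; once that is in hand the equal-length statement follows at once by projecting back to the planar case.

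First I would fix $L_1$ and $L_2$ and parametrize the motion of the edge $\p_1\p_2$ by $\theta:=\theta_{12}$ through~(\ref{eqn:trig-2}), so that $\p_1(\theta)=\q_{12}+t_{12}(\theta)\v_1$ and $\p_2(\theta)=\q_{21}+t_{21}(\theta)\v_2$ are explicit trigonometric functions of $\theta$ (and $\theta$ is non-constant along the motion, since a pinned edge $\p_1\p_2$ would force $\p_3$ into a discrete set). Since the triangle is rigid and nondegenerate, $\p_3$ never lies on the line $\p_1\p_2$, so over an interval of $\theta$-values the point $\p_3$ is one of the at most two points of $L_3$ lying at distance $d_{13}$ from $\p_1(\theta)$ and $d_{23}$ from $\p_2(\theta)$; fixing the branch by continuity makes $\p_3(\theta)=\q_{31}+t_{31}(\theta)\v_3$ real-analytic. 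Using $t_{13}=t_{12}-e_{123}$, $t_{23}=t_{21}-e_{213}$, $t_{32}=t_{31}-e_{312}$, the two distance conditions become exactly the two equations~(\ref{eqn:high-d-length}) for the pairs $13$ and $23$: each is a monic quadratic in $t_{31}(\theta)$ whose coefficients are explicit trigonometric polynomials in $\theta$. As in the planar proof, these algebraic identities, holding on an interval, hold for all $\theta$ --- equivalently, they are identities over $\C(e^{i\theta})$.

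The heart of the matter is the over-determination this produces: a single function $t_{31}(\theta)$ must be a common root of two monic quadratics with trigonometric-polynomial coefficients, so their resultant in $t_{31}$ vanishes identically in $\theta$. I would expand this resultant, view it as a trigonometric polynomial of degree four in $\theta$, and set each of its Fourier coefficients to zero. Granting a nondegenerate moving triangle (so $d_{ij}>D_{ij}$ and $|c_{ij}|<1$), I expect the resulting polynomial system in the data to force $e_{123}=e_{213}=e_{312}=0$ --- so on each line the two feet coincide, $\q_{12}=\q_{13}=:\q_1$, $\q_{21}=\q_{23}=:\q_2$, $\q_{31}=\q_{32}=:\q_3$ --- together with the collinearity of $\q_1,\q_2,\q_3$. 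This is the main obstacle: it is the single step that is a computation rather than a soft argument, and it needs care with the square root produced when solving the quadratics (square it away, then check signs) and with the analyticity bookkeeping. An equivalent packaging is through the half-turn $\sigma$ about the common perpendicular line $m_{12}$ of $L_1$ and $L_2$: the identities above say $\p_i(\theta+\pi)=\sigma(\p_i(\theta))$ for $i=1,2$, and the work is to upgrade this to $\sigma(L_3)=L_3$, i.e.\ to rule out an extra rotation of $\p_3$ about the line $\p_1\p_2$ that the edge $\p_1\p_2$ cannot detect.

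With this in hand, set $\ell:=m_{12}$. By construction $\ell$ runs through $\q_1$ and $\q_2$ and is perpendicular to $L_1$ and $L_2$; and since $\q_1-\q_3=\q_{13}-\q_{31}$ is perpendicular to $\v_3$ and, by the collinearity of $\q_1,\q_2,\q_3$, is a direction vector of $\ell$, the line $\ell$ is perpendicular to $L_3$ as well and meets it at $\q_3$. To finish, project orthogonally along the direction of $\ell$ onto the orthogonal hyperplane: each $L_i$, being perpendicular to $\ell$, maps to a line $\bar L_i$; the three points $\q_i\in\ell$ map to one point $\bar O$, so $\bar L_1,\bar L_2,\bar L_3$ are concurrent at $\bar O$; and $|\bar\p_i-\bar\p_j|^2=d_{ij}^2-D_{ij}^2$ is constant, so the $\bar\p_i$ execute a planar motion of exactly the type treated in Theorem~\ref{thm:plane-main}. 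That theorem then yields that each $\bar\p_i$ sweeps an interval of one common length centered at $\bar O$; since $\p_i=\q_i+t_i\v_i$ carries the same coordinate $t_i$ as $\bar\p_i$, the same holds on each $L_i$ with midpoint $\q_i$, which is the assertion.
\eop
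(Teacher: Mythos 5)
Your overall architecture --- establish a common perpendicular line $\ell$ meeting all three lines, then project along $\ell$ to reduce to Theorem~\ref{thm:plane-main} --- is the same as the paper's, and you have correctly isolated the crux. The problem is that the crux is exactly the step you do not carry out: you write that you \emph{expect} the identical vanishing of the resultant of the two monic quadratics (the $13$ and $23$ instances of~(\ref{eqn:high-d-length})) in $t_{31}$ to force $e_{123}=e_{213}=e_{312}=0$ together with the collinearity of the feet. As written this is a conjecture, not an argument; the entire theorem rests on it, and it is not obvious: the resultant is a degree-four trigonometric polynomial whose Fourier coefficients are complicated polynomials in the $a_{ij}, b_{ij}, c_{ij}, e_{ijk}$, and there are degenerate branches (for instance, the two quadratics in $t_{31}$ having identical linear and constant parts for all $\theta$) in which the resultant vanishes identically without directly yielding your conclusion, so they must be treated separately. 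The paper's route to the same place avoids the resultant: since both equations are monic quadratics in $t_{31}$ (after substituting $t_{32}=t_{31}+e_{321}$), their \emph{difference} is linear in $t_{31}$, so either the coefficient of $t_{31}$ vanishes identically (the degenerate branch, in which the $12$ and $13$ equations already imply the $23$ equation and the edge makes a full turn) or $t_{31}$ is solved explicitly as a trigonometric rational function of $\theta_{12}$; in either branch one reads off that the image of each $\p_i$ is symmetric about the corresponding foot $\q_{ij}$, and the symmetry of all six images about their feet is what forces the two feet on each line to coincide and the midpoints to lie on one common perpendicular. Until you either execute your Fourier-coefficient computation in full (including the degenerate branches and the sign issues you flag) or switch to this linearization, the proof is incomplete at its central point.

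A secondary issue: in ambient dimension $n>3$, your single orthogonal projection along $\ell$ lands in a hyperplane of dimension $n-1\ge 3$, where three concurrent lines need not be coplanar, so Theorem~\ref{thm:plane-main} does not directly apply to the projected configuration. You need either to iterate the projection down to dimension three and then two, as the paper does, or to argue separately that three concurrent, pairwise non-parallel, non-coplanar lines admit no continuous rigid-triangle motion.
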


\begin{proof}Start with Equation (\ref{eqn:high-d-length}) for the $L_1$, $L_2$ lines defining the variable $\theta_{12}=\theta_{21}$ which, in turn defines the positions for $\p_1(\theta_{12})$ on $L_1$ and  $\p_2(\theta_{12})$ on $L_2$.  Note that  $t_{12} =t_{13}+e_{123}$ and $t_{21} =t_{23}+ e_{213}$.  So we can regard $t_{13}$ and  $t_{23}$ as linear functions of $t_{12}$ and $t_{21}$.
Similarly, $t_{32} = t_{31} + e_{321}$, so we can regard $t_{32}$, as a function of $t_{31}$.  So if we subtract the $13$ and $23$ equations for (\ref{eqn:high-d-length}),  we are only left with linear terms in $t_{31}$.  The squared terms have cancelled.  Thus either $t_{31}$ has no term in the difference of the two equations, or $t_{31}$ is a non-zero rational function of  $t_{12}$ and $t_{21}$ and thus $\theta_{12}$.

If the linear $t_{31}$ terms have disappeared, then the $12$ and $13$ equations imply the $23$ equation, which means that the $12$ edge of the triangle makes a full $360^{\circ}$ turn and the image of $\p_1$ in $L_1$ is symmetric about $\q_{12}$.  If the $t_{31}$ term is a non-zero rational function of  $t_{12}$ and $t_{21}$ and thus $\theta_{12}$, and again the image of $\p_1$ in $L_1$ is symmetric about $\q_{12}$.

Applying the above argument to each equation for (\ref{eqn:high-d-length}) for $ij$, we see that each image of $\p_i$ in $L_i$ is symmetric about $\q_i$.  If all the lines lie in a three-dimensional Euclidean space, and no two lines $L_i$ are parallel, then the midpoints of the images of each $\p_i$ must lie on a line perpendicular to all of the $L_i$, and as claimed.  So these lines are just a three-dimensional ``lift" of the two-dimensional case.

If the lines span a higher dimensional space, then there is a non-zero vector perpendicular to each $\v_i$, for $i=1,2,3$, and the lines $L_i$ can be projected orthogonally into a three-dimensional space, and the projection of the points on the projected lines will be a mechanism, one dimension lower. Then similarly the three-dimensional mechanism comes from a two-dimensional mechanism, from the argument above. $\Box$
\end{proof}

The following are some immediate corollaries.

\begin{corollary}If a polygon in the plane is continuously moves as a rigid body so that each vertex stays on a straight line, then those vertices all lie on a circle or the three lines are parallel.
\end{corollary}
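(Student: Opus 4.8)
The plan is to reduce everything to Theorem~\ref{thm:plane-main} applied to triples of vertices, plus one classical feature of the hypocycloid drawer. Write the polygon's vertices as $\p_1,\dots,\p_n$ with $\p_i$ confined to the line $L_i$; since a (nondegenerate) polygon has vertices that are not all collinear, we may fix three of them forming a genuine triangle. If the lines $L_1,\dots,L_n$ are all mutually parallel we are in the second alternative and there is nothing to prove, so assume instead that some pair, say $L_a$ and $L_b$, is not parallel, and write $L_a\cap L_b=\{O\}$.

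First I would pin down the motion. Pick a vertex $\p_c$ that does not lie on the body-frame line through $\p_a$ and $\p_b$; this is possible because the vertices are not all collinear. Then $\p_a,\p_b,\p_c$ is a triangle with fixed edge lengths moving rigidly with vertices on $L_a,L_b,L_c$, and since $L_a\not\parallel L_b$ Theorem~\ref{thm:plane-main} forces the hypocycloid case. Because a planar rigid motion is determined by the images of three non-collinear points, the entire polygon is rigidly carried by the rolling disk of that mechanism: an inner circle of radius $\rho$ rolling without slipping inside a circle of radius $2\rho$ centered at $O$. From~(\ref{eqn:trig}), $\p_a$ oscillates along $L_a$ with amplitude $\sqrt{a_{ab}^2+b_{ab}^2}=d_{ab}/\sin\alpha_{ab}>0$, which equals the diameter $2\rho$ of the rolling circle; in particular $\rho>0$.

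Next I would show every vertex sits on the rolling circle. Here I would use the classical property of this mechanism (the trammel of Archimedes / Cardan circles): a point rigidly attached to the rolling disk at distance $r$ from its center $C$ traces an ellipse centered at $O$ with semi-axes $\rho+r$ and $|\rho-r|$, which is a segment exactly when $r=\rho$ and a genuine circle of radius $\rho>0$ when $r=0$. Since each vertex $\p_i$ is constrained to the straight line $L_i$, its trace must be a (possibly degenerate) segment, and as $\rho>0$ this is only possible when $|\p_i-C|=\rho$. Hence all of $\p_1,\dots,\p_n$ lie on the circle of radius $\rho$ about $C$ in the body frame, which at every instant is a genuine circle in the plane, as claimed. (One can avoid the ellipse computation by re-applying Theorem~\ref{thm:plane-main} to each triangle $\p_a\p_b\p_\ell$ and noting that the resulting rolling circle depends only on the motion of the rigid segment $\p_a\p_b$, hence is always the same; but one then has to treat separately the few vertices $\p_\ell$ that happen to be collinear with $\p_a$ and $\p_b$.)

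The substance of the statement is entirely contained in Theorem~\ref{thm:plane-main}; the one genuinely new ingredient is the ellipse-tracing property above (short, and already alluded to in the proof of Theorem~\ref{thm:plane-main}), so I expect the main ``obstacle'' to be merely writing it cleanly and checking the harmless degenerate cases ($\rho>0$, existence of the auxiliary vertex $\p_c$, and possible coincidences among the $L_i$).
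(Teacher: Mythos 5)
Your proof is correct, and since the paper offers no argument for this corollary (it is merely listed as ``immediate''), your route is the natural completion of what the authors gesture at: Theorem~\ref{thm:plane-main} applied to one non-collinear triple of vertices pins the motion down to the Cardan/hypocycloid mechanism, and the trammel-of-Archimedes fact --- that a point rigidly attached to the rolling disk at distance $r$ from its center traces an ellipse with semi-axes $\rho+r$ and $|\rho-r|$, which is exactly the ellipse alluded to parenthetically in the proof of Theorem~\ref{thm:plane-main} and which meets a line in at most two points unless $r=\rho$ --- forces every remaining vertex onto the rolling circle. Your handling of the degenerate cases (choosing the auxiliary vertex off the line $\p_a\p_b$, checking $\rho>0$, and covering vertices collinear with $\p_a,\p_b$ via the ellipse argument rather than by reapplying the theorem) is exactly what a careful write-up needs.
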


\begin{corollary}Given three lines $L_1, L_2, L_3$ on a Euclidean space such that they are are not all perpendicular to a fourth line, then there are at most $8$ configurations of a triangle $\p_1,\p_2,\p_3$ with fixed edge lengths, where $\p_i $ is on $L_i$, $i=1,2,3$.
\end{corollary}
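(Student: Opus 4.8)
\medskip\noindent\textbf{Proof proposal.} The plan is to reduce the statement to B\'ezout's theorem. Coordinatize each line by a single real parameter: fix a base point $\r_i\in L_i$, and write $\p_i=\r_i+s_i\v_i$ with $\v_i$ the unit direction of $L_i$, so that a configuration of the triangle is a triple $(s_1,s_2,s_3)\in\R^3$. Expanding $(\p_i-\p_j)^2$ exactly as in Equations (\ref{eqn:length}) and (\ref{eqn:high-d-length}), the three prescribed edge lengths become three equations
\[
F_{ij}(s_i,s_j):=(\p_i-\p_j)^2-d_{ij}^2=s_i^2+s_j^2-2c_{ij}s_is_j+(\text{affine in }s_i,s_j)=0,
\]
one for each pair $(i,j)\in\{(1,2),(1,3),(2,3)\}$. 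Each $F_{ij}=0$ is a quadric surface in $(s_1,s_2,s_3)$-space, so the configuration set is the common zero locus of three quadrics.

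The first observation is that this locus is finite. Because the three lines are not all perpendicular to a common line, they are in particular not all parallel; so Theorem \ref{thm:space-main} forbids any continuous one-parameter motion, and hence the configuration set contains no positive-dimensional piece. Granting finiteness, B\'ezout's theorem bounds the number of common zeros of three quadric surfaces by $2\cdot2\cdot2=8$, which is the assertion.

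To make this self-contained --- and to get around the fact that B\'ezout is most natural over $\C$ rather than over $\R$ --- I would run the elimination by hand. Subtracting two of the equations, say $F_{12}-F_{13}$, cancels the $s_1^2$ term and leaves an equation linear in $s_1$, of the form $s_1\,A(s_2,s_3)+B(s_2,s_3)=0$ with $\deg A=1$ and $\deg B=2$; away from $\{A=0\}$ this solves for $s_1$ as a rational function of $s_2,s_3$. Substituting back into $F_{12}=0$ and clearing denominators gives a plane quartic $Q(s_2,s_3)=0$, while $F_{23}(s_2,s_3)=0$ is still a conic; and the projection $(s_1,s_2,s_3)\mapsto(s_2,s_3)$ is injective on the configuration set, so the number of configurations is at most the number of intersection points of a conic and a quartic in the plane, namely $2\cdot4=8$ when that number is finite. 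Finiteness holds because a shared component of $Q=0$ and $F_{23}=0$ would again produce a one-parameter family of configurations, contradicting Theorem \ref{thm:space-main}; equivalently, the resultant $\mathrm{Res}_{s_3}(Q,F_{23})$ is a polynomial of degree at most $8$ in $s_2$ that is not identically zero, and in this form the argument does not care whether one works over $\R$ or $\C$.

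The step I expect to be the main obstacle is the bookkeeping for the degenerate strata: the locus $\{A(s_2,s_3)\equiv0\}$, which records a special mutual position of the three lines and also destroys the injectivity used above, and the cases where the plane curves $Q=0$ and $F_{23}=0$ fail to meet properly. In each of these cases one must verify, once more via Theorem \ref{thm:space-main}, that either there are no real configurations at all, or the lines must be all parallel or all incident to a common perpendicular line --- both ruled out by the hypothesis --- so that the bound $8$ is never exceeded. Everything outside this casework is routine.\hfill$\Box$
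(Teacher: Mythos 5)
Your first two paragraphs are exactly the paper's proof: the three edge-length constraints are degree-two equations in the line parameters, and B\'ezout bounds the number of solutions by $2^3=8$ --- the paper's entire argument is that one sentence. The additional elimination-and-resultant machinery, including the degenerate strata you flag at the end (note that real finiteness via Theorem \ref{thm:space-main} does not by itself rule out positive-dimensional \emph{complex} components, which is what B\'ezout actually requires), goes beyond what the paper supplies rather than diverging from it.
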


This is because each edge length is determined by a degree two equation, and B\'ezout's theorem \cite{Bezout} implies that there are at most $2^3=8$ individual solutions.

\bibliographystyle{plain}
\bibliography{NSF-10,framework}

\end{document}